	\newcommand{\singlespacing}{\let\CS=\@currsize\renewcommand{\baselinestretch}{1}\tiny\CS}
		\newcommand{\oneandahalfspacing}{\let\CS=\@currsize\renewcommand{\baselinestretch}{1.25}\tiny\CS}
		\newcommand{\doublespacing}{\let\CS=\@currsize\renewcommand{\baselinestretch}{1.35}\tiny\CS}
	\def\@citex[#1]#2{\if@filesw\immediate\write\@auxout{\string\citation{#2}}\fi
			\def\@citea{}\@cite{\@for\@citeb:=#2\do
			{\@citea\def\@citea{,\linebreak[0]\hskip0pt plus .2em}%
				\@ifundefined{b@\@citeb}%
				{{\bf ?}\@warning{Citation `\@citeb' on page \thepage\space undefined}}%
				\hbox{\csname b@\@citeb\endcsname}}}{#1}}
	\newtheorem{Theorem}{Theorem}
	\newtheorem{Lemma}{Lemma}
	\date{}
\begin{document}
		
		
		\title{\bf JACOBI SUMS OF ORDER $2l^{2}$ }

\author{Md. Helal Ahmed and Jagmohan Tanti}
		\date{}	\maketitle \setlength{\parskip}{.11in}
		\setlength{\baselineskip}{15pt}
		\begin{center}\textbf{\underline{Abstract}}\end{center}
		Let $l\geq3$ and $p$ be primes, $q=p^r$, $r\in\mathbb{Z}^{+}$, $q\equiv1\pmod{2l^2}$ and $\mathbb{F}_q$ a field with $q$ elements. In this paper we establish the 
		congruences for Jacobi sums of order $2l^2$ and  also explore here to express these  
		Jacobi sums in terms of Dickson-Hurwitz sums. These expressions and congruences are useful in algebraic characterizations of the Jacobi 
		sums of order $2l^2$. 

\textbf{\textit{Keywords}}: Jacobi sums; Dickson-Hurwitz sums; Congruences; Cyclotomic field.

\textbf{\textit{Mathematics Subject Classification 2010}:} Primary 11T24, Secondary 11T22.
\section{\underline{\textit{Introduction}}}
Let $e\geq2$ be an integer, $p$ a rational prime,  $q=p^{r}, r \in\mathbb{Z}^{+}$ and $q\equiv 1 \pmod{e}$. Let $\mathbb{F}_{q}$ 
be a finite field of $q$ elements. We can write $q=p^{r}=ek+1$ for some $k\in\mathbb{Z}^{+}$. Let $\gamma$ be a generator of 
the cyclic group $\mathbb{F}^{*}_{q}$ and $\zeta_e=exp(2\pi i/e)$. 
Define a multiplicative character $\chi_e : \ \mathbb{F}^{*}_{q} \longrightarrow \mathbb{Q}(\zeta_e)$ by $\chi_e(\gamma)=\zeta_e$ and extend it on 
$\mathbb{F}_q$ 
by putting $\chi_e(0)=0$. 
For  integers $\displaystyle 0\leq i,j\leq e-1$, the Jacobi sum $J_{e}(i,j)$ is define by

$$J_{e}(i,j)= \sum_{v\in \mathbb{F}_{q}} \chi_e^{i}(v) \chi_e^{j}(v+1).$$

However in the literature a variation of Jacobi sums are also considered and is defined by 
$$J_e(\chi_e^i,\chi_e^j)=\sum_{v\in\mathbb{F}_q}\chi_e^i(v)\chi_e^j(1-v),$$ but are related by $J_e(i,j)=\chi_e^i(-1)J_e(\chi_e^i,\chi_e^j)$.

The Problem of congruences of Jacobi sums of order $e$ concerns to determine an element modulo an appropriate power of  
 $(1-\zeta_e)$ in $\mathbb{Z}[\zeta_e]$, which is coprime to $e$ and this plays a vital role to determine the element uniquely along with 
some other elementary conditions. 

For some small values of $e$ the study of congruences of Jacobi sums is available in the literature. For example
Dickson \cite{Dickson} obtained the congruences $J_{l}(1,n)\equiv -1 \pmod{(1-\zeta_{l})^{2}}$ for $1\leq n \leq l-1$. Parnami, Agrawal and Rajwade 
\cite{Parnami} also calculated this separately. Iwasawa \cite{Iwasawa} in $1975$, and in $1981$ Parnami, Agrawal and Rajwade \cite{Parnami 2} 
established that the above congruences also hold $\pmod{(1-\zeta_{l})^{3}}$. Further in $1995$, Acharya and Katre \cite{Katre} extended the work on finding
the congruences for 
Jacobi sums and  showed that 
\begin{center}
$J_{2l}(1,n)\equiv -\zeta_l^{m(n+1)}($mod$\ (1-\zeta_{l})^{2})$,
\end{center} 
where $n$ is an odd integer such that $1\leq n \leq 2l-3$ and $m=$ind$_{\gamma}2$.
Also in $1983$, Katre and Rajwade \cite{Rajwade} obtained the congruence of Jacobi sum of order $9$, i.e.,
\begin{center}
$J_{9}(1,1)\equiv -1-($ind$\ 3)(1-\omega)($mod$\ (1-\zeta_{9})^{4})$,
\end{center} 
where $\omega = \zeta_{9}^{3}$.
In $1986$, Ihara \cite{Ihara} showed that if $k>3$ is an odd prime power, then
 \begin{center}
 $J_{k}(i,j)\equiv -1 \pmod{(1-\zeta_{k})^{3}}$.
 \end{center}
 Evans (\cite{Evans}, $1998$) used simple methods to generalize this result for all $k>2$.
Congruences for the Jacobi sums of order $l^{2}$ ($l$ odd prime) were obtained by Shirolkar and Katre \cite{lsquare}. They showed that\\ 
$J_{l^{2}}(1,n)\equiv
 \begin{cases}
  -1 + \sum_{i=3}^{l}c_{i,n} (\zeta_{l^2} -1)^{i} ($mod$\ (1-\zeta_{l^2})^{l+1}) \ \ \ \ \ if\ $gcd$ (l,n)=1, \\
  -1 \ ($mod$\ (1-\zeta_{l^2})^{l+1}) \ \ \ \ \ \ \  \ \ \ \ \ \ \ \ \ \ \ \ \ \ \ \ \ \ \ \ \ \ if\ $gcd$ (l,n)=l.  
 \end{cases}$ \\ \\ 
In this paper, we determine the congruences $\pmod{(1-\zeta_{2l^2})^{l+1}}$ for Jacobi sums of order $2l^{2}$  and also calculate their coefficients 
in terms of Dickson-Hurwitz sums.  We split the problem into two cases:\\ 
\textbf{Case 1}. $n$ is odd. This case splits into three sub-cases:\\
 \textbf {Subcase i}. $n=l^{2}$.\\
 \textbf {Subcase ii}. $n=dl, \ $where$\ 1\leq d\leq 2l-1, d$ is an odd and $d\neq l$.\\
 \textbf {Subcase iii}. gcd$(n,2l^{2})=1$.\\ \\
 \textbf{Case 2}. $n$ is even. In this case the Jacobi sums $J_{2l^{2}}(1,n)$ can be calculated using the relation 
 $J_{2l^{2}}(1,n)=\chi_{2l^2}(-1)J_{2l^{2}}(1,2l^{2}-n-1)$ (which has been shown in the next section).\\ \\  
This paper has been organized in the following pattern. Section 2, discussess some properties of Jacobi sums, Dickson-Hurwitz sums and relations 
among them. These properties are used to evaluate the congruences of Jacobi sums and their expressions in terms of Dickson-Hurwitz sums. 
In Section 3, we prove some Lemma's, which are needed for the proofs of congruences for Jacobi sums. Section 4 is assigned to discuss the main results, 
i.e. to determine Jacobi sums of order $2l^{2}$ in terms of the coefficients of Dickson-Hurwitz sums and to calculate the 
congruences for Jacobi sums of order $2l^{2}$.  
\section{Preliminaries}
Let $\zeta=\zeta_{2l^2}$ and $\chi=\chi_{2l^2}$ then $\chi^2=\chi_{l^2}$ is a character of order $l^2$ and $\zeta_{l^2}=\zeta^2$ is a primitive $l^2$th 
root of unity. The Jacobi sums $J_{l^{2}}(i,j)$ and $J_{2l^{2}}(i,j)$ of order $l^{2}$ and $2l^{2}$ respectively are defined as in the previous section. 
We also have $\zeta=-\zeta_{l^2}^{(l^{2}+1)/2}$.
\subsection{Properties of Jacobi sums} 
In this subsection we discus some properties of Jacobi sums.
\\ \\
\textbf{Proposition 1}.
If $m+n+s\equiv 0 \ ($mod$\ e)$ then 
\begin{equation*}
J_{e}(m,n)=J_{e}(s,n)=\chi_e^{s}(-1)J_{e}(s,m)=\chi_e^{s}(-1)J_{e}(n,m)=\chi_e^{m}(-1)J_{e}(m,s)=\chi_e^{m}(-1)J_{e}(n,s).
\end{equation*}  	
In particular,
\begin{equation*}
J_{e}(1,m)=\chi_e(-1)J_{e}(1,s)=\chi_e(-1)J_{e}(1,e-m-1).
\end{equation*} 
\textbf{Proposition 2}. 
\ \ \ \ $J_{e}(0,j)= \begin{cases}
-1 \ \ \ \ \ if  \ j\not\equiv 0 \ ($mod$\ e) ,\\
q-2 \ \ if  \ j\equiv 0 \ ($mod$\ e).
\end{cases}$

\ \ \ \ $J_{e}(i,0)=-\chi_e^{i}(-1) \ if \  i\not\equiv 0 \ ($mod$\ e).$
\\ \\ 
\textbf{Proposition 3}.  \ \ \ \ Let $m+n\equiv 0 \ ($mod$\ e)$ \ but not both $m$ and $n$ zero $\pmod{e}$. Then $J_{e}(m,n)=-1.$
\\ \\ \textbf{Proposition 4}.  \ \  For $(k,e)=1$ and $\sigma_k$ a $\mathbb{Q}$ automorphism of $\mathbb{Q}(\zeta_e)$ with $\sigma_k(\zeta_e)=\zeta_e^k$, 
we have $\sigma_{k}J_{e}(m,n)=J_{e}(mk,nk)$. In particular, if $(m,e)=1,\ m^{-1}$ \ denotes the inverse of $m\pmod{e}$ then 
$\sigma_{m^{-1}}J_{e}(m,n)=J_{e}(1,nm^{-1})$.
\\ \\ \textbf{Proposition 5}.  \ \ \ \ 
$J_{2e}(2m,2s)=J_{e}(m,n)$. \\ \\
\textbf{Proposition 6}. \ \ \ Let $m$, $n$, $s$ be integers such that $m+n \not\equiv 0 \ ($mod$\ 2l^{2})$ and $m+s \not\equiv 0 \ ($mod$\ 2l^{2})$. Then 
\begin{equation*}
J_{2l^{2}}(m,n) J_{2l^{2}}(m+n ,s) = \chi^{m}(-1)J_{2l^{2}}(m,s)J_{2l^{2}}(n,s+m).
\end{equation*}
\\  \textbf{Proposition 7}. \ \ \ \ $J_{2l^{2}}(1,n) \overline{J_{2l^{2}}(1,n)}= \begin{cases}
q \ \ \ \ \ if \ n\not\equiv 0, -1 \ ($mod$\ 2l^{2}),\\
1 \ \ \ \ \ if \ n\equiv 0, -1 \ ($mod$\ 2l^{2}).
\end{cases}$
\begin{proof}
The proofs of 1 - 5 follows directly using the definition of Jacobi sums (see \cite{Jacobi}). The proofs of 6 and 7 are analogous to the proofs in the 
$2l$ and $l$ cases respectively (see \cite{Katre}, \cite{Parnami}). 
\end{proof}
\noindent\textbf{Remark 1:} The Jacobi sums of order $2l^{2}$ can be determined from the Jacobi sums of order $l^{2}$. The Jacobi sums of order $2l^2$
can also be obtained from $J_{2l^{2}}(1,n)$, $1\leq n\leq 2l^{2}-3$ for $n$ odd (or equivalently, $2\leq n\leq 2l^{2}-2$ for $n$ even). Further the Jacobi 
sums of  
order $l^{2}$ can be evaluated if one knows the Jacobi sums $J_{l^{2}}(1,i)$, $1\leq i\leq \frac{l^{2}-3}{2}$.

\subsection{Dickson-Hurwitz sums}
The Dickson-Hurwitz sums \cite{lsquare} of order $e$ on $\mathbb{F}_{q}$ are defined for $i,j \pmod{e}$ by
\begin{equation*}
B_{e}(i,j)=\sum_{h=0}^{e-1}(h,i-jh)_e.
\end{equation*}
For $q=ek+1$ these satisfy the relations:
\\ $B_{e}(i,j)=B_{e}(i,e-i-j),$ \ \ $B_{e}(0,0)=k-1$, \ \ $B_{e}(i,0)=k$ \ if $1\leqslant i \leqslant e-1$, \ \ $\sum_{i=0}^{e-1}B_{e}(i,j)=q-2$, and 
for $(j,e)=1$, \ \ $B_{e}(i\overline{j},\overline{j})=B_{e}(i,j)$, \ \ where $k\overline{k}\equiv 1 \pmod{e}$. \\  \\
Jacobi sums $J_{e}(\chi_e,\chi_e^{j})$ and Dickson-Hurwitz sums are related by (for q=p, see \cite{Dickson})
\begin{equation*}
\chi_e^{j}(-1)J_{e}(\chi_e,\chi_e^{j})=\chi_e^{j}(-1)\chi_e(-1)J_{e}(1,j)=\sum_{i=0}^{e-1}B_{e}(i,j)\zeta^{i}_{e}.
\label{1.1} \end{equation*}
Thus if k is even or $q=2^{r}$ then $J_{e}(1,j)=\sum_{i=0}^{e-1}B_{e}(i,j)\zeta^{i}_{e}$ \cite{lsquare}. Further
Shirolkar and Katre \cite{lsquare} calculated the coefficients of Jacobi sums of order $l^{2}$ in terms of Dickson-Hurwitz sums, which has been stated in
the following proposition:
 \\ \\
 \textbf{Proposition 8.} \  \ \  \  $J_{l^{2}}(1,n)=\sum_{i=0}^{l(l-1)-1} b_{i,n}\zeta_{l^2}^{i}$,
 \\where $b_{i,n}=B_{l^{2}}(i,n)-B_{l^{2}}(l(l-1)+j,n),\, 0\leq j \leq l-1, \ \ j\equiv i \pmod{l}$.
 
\section{Congruences for Jacobi sums of order $2l^{2}$}
The evaluation of congruences for the Jacobi sums of order $l^{2}$ has been done by Shirolkar and Katre \cite{lsquare} and has been stated in the following
theorem.
\begin{Theorem} \label{T1}
Let $l>3$ be a prime and $p^{r}=q\equiv 1 \ (\emph{mod}\ l^{2})$. If $1\leq n \leq l^{2}-1$, then a (determining) congruence for $J_{l^{2}}(1,n)$ for a 
finite field $\mathbb{F}_{q}$ is given by \\ $J_{l^{2}}(1,n)\equiv
\begin{cases}
-1 + \sum_{i=3}^{l}c_{i,n} (\zeta_{l^2} -1)^{i} (\emph{mod}\ (1-\zeta_{l^2})^{l+1}) \ \ \ \ \ if\ \emph{gcd} (l,n)=1, \\
 -1 \ (\emph{mod}\ (1-\zeta_{l^2})^{l+1}) \ \ \ \ \ \ \  \ \ \ \ \ \ \ \ \ \ \ \ \ \ \ \ \ \ \ \  if\ \emph{gcd} (l,n)=l,  
\end{cases}$  \\ 
where for $3\leq i \leq l-1$, $c_{i,n}$ and $c_{l,n}=S(n)$ are as described in \cite{lsquare}.
\end{Theorem}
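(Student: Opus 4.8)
This theorem is the determining congruence of Shirolkar and Katre \cite{lsquare}, and the plan is to reconstruct it by reducing everything to the Dickson--Hurwitz representation of the Jacobi sum and then re-expanding that representation in the uniformizer at the prime above $l$. By Proposition 8 (which applies directly, since $q\equiv1\pmod{l^2}$ with $l$ odd forces $k$ even and hence $\chi(-1)=1$) we may write $J_{l^{2}}(1,n)=\sum_{i=0}^{l(l-1)-1}b_{i,n}\zeta_{l^2}^{\,i}$ with $b_{i,n}\in\mathbb{Z}$. I would then substitute $\zeta_{l^2}^{\,i}=\bigl(1+(\zeta_{l^2}-1)\bigr)^{i}=\sum_{t\ge 0}\binom{i}{t}(\zeta_{l^2}-1)^{t}$ and collect terms, so that
$$J_{l^{2}}(1,n)=\sum_{t\ge 0}C_{t}\,(\zeta_{l^2}-1)^{t},\qquad C_{t}:=\sum_{i=0}^{l(l-1)-1}b_{i,n}\binom{i}{t}\in\mathbb{Z}.$$
The theorem then reduces to computing the residues $C_{t}\bmod l$ for $0\le t\le l$.

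The arithmetic input that makes this work is that $l$ is totally ramified in $\mathbb{Q}(\zeta_{l^2})$, so that for the normalized valuation $v$ we have $v(1-\zeta_{l^2})=1$ and $v(l)=l(l-1)$. Since $l(l-1)\ge l+1$ for $l\ge 3$, any rational integer divisible by $l$ already vanishes modulo $(1-\zeta_{l^2})^{l+1}$. Consequently $C_{t}(\zeta_{l^2}-1)^{t}$ is determined modulo $(1-\zeta_{l^2})^{l+1}$ by $C_{t}\bmod l$, and only $t=0,1,\dots,l$ can contribute; this is precisely why the congruence stabilizes at the exponent $l+1$ and why the coefficients $c_{i,n}$ are residues modulo $l$.

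For the low-order coefficients I would combine two facts. First, a direct evaluation gives $C_{0}=\sum_{i}b_{i,n}=q-2-l\,\Sigma\equiv-1\pmod{(1-\zeta_{l^2})^{l+1}}$, where $\Sigma=\sum_{j=0}^{l-1}B_{l^2}(l(l-1)+j,n)$, using $\sum_{i=0}^{l^2-1}B_{l^2}(i,n)=q-2$ and $q\equiv1\pmod{l^2}$. Second, Ihara's congruence $J_{l^{2}}(1,n)\equiv-1\pmod{(1-\zeta_{l^2})^{3}}$ (valid as $l^2$ is an odd prime power exceeding $3$) forces $C_{1}\equiv C_{2}\equiv0\pmod l$, which accounts for the absence of linear and quadratic terms. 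For $3\le t\le l$ I would compute $\binom{i}{t}\bmod l$ by Lucas' theorem: writing $i=lt'+j$ with $0\le j\le l-1$ and $0\le t'\le l-2$, one gets $\binom{i}{t}\equiv\binom{j}{t}$ for $t\le l-1$ and $\binom{i}{l}\equiv t'\pmod l$. The case $t=l$ then yields the clean formula $C_{l}\equiv\sum_{t'=0}^{l-2}\sum_{j=0}^{l-1}t'\,b_{lt'+j,n}$; substituting $b_{lt'+j,n}=B_{l^2}(lt'+j,n)-B_{l^2}(l(l-1)+j,n)$ and recalling $S(n)=\sum_{t=0}^{l-1}\sum_{j=0}^{l-1}tB_{l^2}(lt+j,n)$, the correction terms assemble into $l(l-1)\Sigma/2$, which is a multiple of $l$, so that $C_{l}\equiv S(n)\pmod l$ exactly as claimed. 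The remaining residues $C_{t}\equiv\sum_{t'}\sum_{j\ge t}\binom{j}{t}\,b_{lt'+j,n}$ for $3\le t\le l-1$ produce the coefficients $c_{t,n}$.

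The main obstacle is this last identification: rewriting the binomial-weighted Dickson--Hurwitz sums as the explicit cyclotomic-number expressions $c_{t,n}$ (built from the $\lambda_h,\lambda_{h,k}$ of \cite{lsquare}) and verifying that the residues agree. This rests on the reduction of $B_{l^2}$ modulo $l$ to cyclotomic numbers of order $l$ and on the symmetry relations $B_{l^2}(i,j)=B_{l^2}(i,l^2-i-j)$ and $B_{l^2}(i\bar j,\bar j)=B_{l^2}(i,j)$; it is the genuinely computational heart of the argument. The sub-case $\gcd(l,n)=l$ I would dispose of separately: when $l\mid n$ the sums $B_{l^2}(lt'+j,n)$ become independent of $t'$ modulo $l$, so every $C_{t}$ with $t\ge3$ (and $S(n)$ in particular) is divisible by $l$, leaving $J_{l^{2}}(1,n)\equiv-1\pmod{(1-\zeta_{l^2})^{l+1}}$. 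Finally, I would stress that the congruence is ``determining'' only in conjunction with the norm relation $J_{l^{2}}(1,n)\overline{J_{l^{2}}(1,n)}=q$, which removes the residual ambiguity in recovering the sum; I would cite this rather than reprove it.
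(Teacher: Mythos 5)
Note first that this paper does not prove the statement at all: Theorem \ref{T1} is imported verbatim from Shirolkar and Katre \cite{lsquare} and is used here only as an input to Lemmas \ref{L2}--\ref{L5}, so there is no in-paper proof to compare against. Measured against the actual argument in \cite{lsquare}, your outline follows the same route and the steps you do carry out are sound: starting from Proposition 8, expanding $\zeta_{l^2}^i=(1+(\zeta_{l^2}-1))^i$ and using total ramification ($v(1-\zeta_{l^2})=1$, $v(l)=l(l-1)\ge l+1$) correctly reduces the problem to the residues $C_t\bmod l$ for $0\le t\le l$; the computation $C_0=q-2-l\Sigma\equiv-1$, the use of the Ihara--Evans congruence to force $l\mid C_1$ and $l\mid C_2$, and the Lucas-theorem bookkeeping giving $C_l=S(n)-\tfrac{l(l-1)}{2}\Sigma\equiv S(n)\pmod l$ all check out. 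What you explicitly defer --- converting $C_t\bmod l$ for $3\le t\le l-1$ into the stated cyclotomic-number expressions for $c_{t,n}$ via the reduction of $B_{l^2}$ modulo $l$ and its symmetries, and the verification that these residues (and $S(n)$) vanish when $l\mid n$ --- is precisely where the bulk of the work in \cite{lsquare} lies, so as it stands this is a correct and well-organized reduction to the hard computation rather than a complete proof. Since the theorem as quoted here only defines the $c_{i,n}$ by reference to \cite{lsquare}, that gap is tolerable for the purposes of this paper, but you should be aware that the ``genuinely computational heart'' you name is not a routine afterthought: it occupies most of the source paper.
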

\begin{Lemma} \label{L1}
Let $p\geq 3$ be a prime and $q=p^{r}\equiv 1 \pmod{2l^{2}}$. If $\chi$ is a nontrivial character of order $2l^{2}$ on the finite field $\mathbb{F}_{q}$, 
then
\begin{equation*}
J_{2l^{2}}(a,a)=\chi^{-a}(4)J_{2l^{2}}(a,l^{2}).
\end{equation*}
\end{Lemma}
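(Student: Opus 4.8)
The plan is to reduce the asserted identity to the standard normalization of the Jacobi sum and then evaluate $J(\chi^a,\chi^a)$ by completing the square. First I would record that $\rho:=\chi^{l^2}$ is the quadratic character of $\mathbb{F}_q$: since $\chi=\chi_{2l^2}$ has order $2l^2$ and $\chi(\gamma)=\zeta$, we get $\rho(\gamma)=\zeta^{l^2}=\zeta_{2l^2}^{l^2}=-1$, so $\rho$ has order $2$. Applying the relation $J_{2l^2}(i,j)=\chi^i(-1)J_{2l^2}(\chi^i,\chi^j)$ from Section~1 to both $J_{2l^2}(a,a)$ and $J_{2l^2}(a,l^2)$, the common factor $\chi^a(-1)$ cancels, so the lemma is equivalent to the normalized identity
\[
J(\chi^a,\chi^a)=\chi^{-a}(4)\,J(\chi^a,\rho),
\]
where $J(\lambda_1,\lambda_2)=\sum_v \lambda_1(v)\lambda_2(1-v)$ is the standard Jacobi sum. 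We may assume $\chi^a$ is nontrivial (i.e. $a\not\equiv 0\pmod{2l^2}$), since otherwise the sums degenerate by Proposition~2.

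The key step is to evaluate the left-hand side elementarily. I would write $J(\chi^a,\chi^a)=\sum_{v}\chi^a\big(v(1-v)\big)$ and complete the square $v(1-v)=\tfrac14-(v-\tfrac12)^2$, which is legitimate because $p\geq 3$ makes $2$ and $4$ invertible in $\mathbb{F}_q$. Substituting $w=v-\tfrac12$ and then $s=2w$ (both bijections of $\mathbb{F}_q$) and pulling out $\chi^a(\tfrac14)=\chi^{-a}(4)$ gives $J(\chi^a,\chi^a)=\chi^{-a}(4)\sum_{s}\chi^a(1-s^2)$. Now I would group the sum over $s$ according to the value $t=s^2$, using the count $\#\{s: s^2=t\}=1+\rho(t)$ (valid for all $t$ with the convention $\rho(0)=0$). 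This yields $\sum_s\chi^a(1-s^2)=\sum_t\chi^a(1-t)+\sum_t\rho(t)\chi^a(1-t)$; the first sum vanishes since $\chi^a$ is a nontrivial character summed over all of $\mathbb{F}_q$, and the second is exactly $J(\rho,\chi^a)=J(\chi^a,\rho)$ by the symmetry of the Jacobi sum.

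Combining the two displays gives $J(\chi^a,\chi^a)=\chi^{-a}(4)J(\chi^a,\rho)=\chi^{-a}(4)J(\chi^a,\chi^{l^2})$, and multiplying back by $\chi^a(-1)$ recovers $J_{2l^2}(a,a)=\chi^{-a}(4)J_{2l^2}(a,l^2)$, as claimed. The main thing to be careful about is the bookkeeping in the character-count step and checking that the substitutions are genuine bijections in characteristic $p\geq 3$; beyond that the argument is routine. I expect the only potential subtlety to be the degenerate case $a\equiv l^2\pmod{2l^2}$, where $\chi^{2a}$ is trivial: a Gauss-sum route through the Hasse--Davenport product relation $g(\chi^a)g(\chi^a\rho)=\chi^{-a}(4)g(\chi^{2a})g(\rho)$ would force a separate treatment there, whereas the completing-the-square argument above goes through uniformly (it uses only that $\chi^a$ is nontrivial), which is why I would favour it.
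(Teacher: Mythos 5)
Your proof is correct and is essentially the paper's own argument: the paper likewise reduces $J_{2l^2}(a,a)=\sum_\beta\chi^a(\beta(1+\beta))$ to a sum over the value $\alpha=\beta(1+\beta)$ weighted by the solution count $1+\chi^{l^2}(1+4\alpha)$, which is exactly your completing-the-square/square-root count in the quadratic character $\rho=\chi^{l^2}$. The only cosmetic difference is that the paper works directly in the $\sum_v\chi^i(v)\chi^j(v+1)$ normalization rather than passing through $J(\chi^a,\chi^a)$, and, like you, it tacitly assumes $\chi^a$ is nontrivial so that $\sum_\alpha\chi^a(\alpha)=0$.
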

\begin{proof}
As for $\alpha\in\mathbb{F}_{q}$, the number of $\beta\in\mathbb{F}_{q}$ satisfying the equation $\beta(1+\beta)=\alpha$ is same as 
$1+\chi^{l^{2}}(1+4\alpha)$, we have 
\begin{eqnarray*}
J_{2l^{2}}(a,a)&=&\sum_{\beta\in\mathbb{F}_{q}}\chi^{a}(\beta(1+\beta))= \sum_{\alpha\in\mathbb{F}_{q}}\chi^{a}(\alpha)\{1+\chi^{l^{2}}(1+4\alpha)\} \\
&=& \chi^{-a}(4)\sum_{\alpha\in\mathbb{F}_{q}}\chi^{a}(4\alpha)\chi^{l^{2}}(1+4\alpha) 
= \chi^{-a}(4) J_{2l^{2}}(a,l^{2}).
\end{eqnarray*}
\end{proof}
\begin{Lemma} \label{L2}
Let $l\geq3$ be a prime, $q=p^{r}\equiv 1 \ (\emph{mod}\  2l^{2})$ and $\gamma$ a generator of $\mathbb{F}_q^*$ then 
\begin{equation*}
J_{2l^{2}}(1,l^{2})\equiv \zeta_{l^2}^{-w}(-1+\sum_{i=3}^{l} c_{i,n}(\zeta_{l^{2}}-1)^{i})(\emph{mod}\ (1-\zeta_{l^{2}})^{l+1}),
\end{equation*}
where $w=ind_\gamma2$.
\end{Lemma}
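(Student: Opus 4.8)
The plan is to deduce this from Theorem 1 (the order-$l^{2}$ result) by using Lemma 1 to transfer the odd-index sum $J_{2l^{2}}(1,l^{2})$ to a diagonal sum of order $2l^{2}$. First I would apply Lemma 1 with $a=1$, which reads $J_{2l^{2}}(1,1)=\chi^{-1}(4)J_{2l^{2}}(1,l^{2})$, and rewrite it as $J_{2l^{2}}(1,l^{2})=\chi(4)J_{2l^{2}}(1,1)$. Since $w=\mathrm{ind}_{\gamma}2$ means $\gamma^{w}=2$ in $\mathbb{F}_{q}$, we have $\chi(2)=\chi(\gamma)^{w}=\zeta^{w}$ and hence $\chi(4)=\chi(2)^{2}=\zeta^{2w}=\zeta_{l^{2}}^{w}$, using $\zeta^{2}=\zeta_{l^{2}}$. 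This reduces the statement to a congruence for the single diagonal sum $J_{2l^{2}}(1,1)$ modulo $(1-\zeta_{l^{2}})^{l+1}$.

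Next I would evaluate $J_{2l^{2}}(1,1)$ in terms of the order-$l^{2}$ data. The bridge is Proposition 5, $J_{2l^{2}}(2,2)=J_{l^{2}}(1,1)$, combined with Lemma 1 at $a=2$, giving $J_{2l^{2}}(2,l^{2})=\chi^{2}(4)J_{l^{2}}(1,1)=\zeta_{l^{2}}^{2w}J_{l^{2}}(1,1)$, so the even-index sums are already pinned to Theorem 1. To reach the odd-index diagonal sum I would use the factorization $\chi=\psi\,\chi_{l^{2}}^{m}$, where $\psi=\chi^{l^{2}}$ is the quadratic character and $m=(l^{2}+1)/2$ (equivalently $\zeta=-\zeta_{l^{2}}^{(l^{2}+1)/2}$), writing $J_{2l^{2}}(1,1)=\sum_{v}\chi_{l^{2}}^{m}(v(v+1))\,\psi(v(v+1))$. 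Reducing this modulo $(1-\zeta_{l^{2}})^{l+1}$ and comparing with the untwisted sum, the claim to establish is $J_{2l^{2}}(1,1)\equiv\zeta_{l^{2}}^{-2w}J_{l^{2}}(1,1)$; together with $\chi(4)=\zeta_{l^{2}}^{w}$ this produces the net factor $\zeta_{l^{2}}^{-w}$, and substituting the Theorem 1 congruence for $J_{l^{2}}(1,1)$ (the case $n=1$) finishes the proof. Concretely one can instead expand $J_{2l^{2}}(1,1)=\sum_{i=0}^{2l^{2}-1}B_{2l^{2}}(i,1)\zeta^{i}$, split into even and odd $i$ via $\zeta^{2j}=\zeta_{l^{2}}^{j}$ and $\zeta^{2j+1}=-\zeta_{l^{2}}^{\,j+(l^{2}+1)/2}$, collapse $B_{2l^{2}}$ onto $B_{l^{2}}$ using the Dickson--Hurwitz relations of Section 3, and apply Proposition 8.

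The main obstacle is precision. The twisting factor $\chi_{l^{2}}^{m}(v(v+1))$ is only congruent to $1$ modulo $(1-\zeta_{l^{2}})$, so a termwise reduction retains only the leading digit, whereas the statement demands accuracy to the $(l+1)$-st power of $(1-\zeta_{l^{2}})$. The real work is therefore to show that the quadratic twist contributes exactly the unit $\zeta_{l^{2}}^{-2w}$ modulo $(1-\zeta_{l^{2}})^{l+1}$ and creates no spurious lower-order terms. This is where the Dickson--Hurwitz relations and the coprimality of $m=(l^{2}+1)/2$ with $l^{2}$ (which lets one pass between $J_{l^{2}}(m,m)$ and $J_{l^{2}}(1,1)$ via the Galois action of Proposition 4) must be combined carefully, mirroring the known $2l$-case argument of Acharya and Katre.
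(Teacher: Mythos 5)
There is a genuine gap. Your opening move (Lemma \ref{L1} with $a=1$, rewritten as $J_{2l^{2}}(1,l^{2})=\chi(4)J_{2l^{2}}(1,1)=\zeta_{l^{2}}^{w}J_{2l^{2}}(1,1)$) is correct and is also an ingredient of the paper's proof, but by itself it only trades one odd-index sum of order $2l^{2}$ for another. The entire content of the lemma is then concentrated in your claim that $J_{2l^{2}}(1,1)\equiv\zeta_{l^{2}}^{-2w}J_{l^{2}}(1,1)\pmod{(1-\zeta_{l^{2}})^{l+1}}$, and you do not prove it: as you yourself note, the quadratic twist in $\chi=\chi^{l^{2}}\cdot\chi_{l^{2}}^{(l^{2}+1)/2}$ is only controlled termwise modulo the first power of $(1-\zeta_{l^{2}})$, and no mechanism is supplied for recovering the remaining $l$ digits of precision. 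The idea you are missing is Proposition 6. Taking $m=n=1$, $s=l^{2}-1$ there and simplifying with Proposition 1 gives $J_{2l^{2}}(1,1)\,J_{2l^{2}}(2,l^{2}-1)=J_{2l^{2}}(1,l^{2})^{2}$; substituting $J_{2l^{2}}(1,1)=\zeta_{l^{2}}^{-w}J_{2l^{2}}(1,l^{2})$ from Lemma \ref{L1} and cancelling one factor of $J_{2l^{2}}(1,l^{2})$ (legitimate, since $J_{2l^{2}}(1,l^{2})\overline{J_{2l^{2}}(1,l^{2})}=q$ is prime to $l$ by Proposition 7) yields the exact identity $J_{2l^{2}}(1,l^{2})=\zeta_{l^{2}}^{-w}J_{2l^{2}}(2,l^{2}-1)=\zeta_{l^{2}}^{-w}J_{l^{2}}(1,(l^{2}-1)/2)$ by Proposition 5, after which Theorem \ref{T1} finishes. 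The point is that the intractable odd-index sum is never evaluated; it cancels out of the multiplicative relation.

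A secondary problem is that the target you set for the ``real work'' appears to be off by a Galois conjugation. The identity above forces $J_{2l^{2}}(1,1)=\zeta_{l^{2}}^{-2w}J_{l^{2}}(1,(l^{2}-1)/2)$ exactly, and by Propositions 1 and 4 one has $J_{l^{2}}(1,(l^{2}-1)/2)=J_{l^{2}}((l^{2}-1)/2,(l^{2}-1)/2)=\sigma_{(l^{2}-1)/2}\bigl(J_{l^{2}}(1,1)\bigr)$. Since $\sigma_{k}$ sends $(\zeta_{l^{2}}-1)^{i}$ to $k^{i}(\zeta_{l^{2}}-1)^{i}$ plus higher-order terms, this conjugation does not act trivially on the correction terms $c_{i}(\zeta_{l^{2}}-1)^{i}$, $3\le i\le l$, at the precision $(1-\zeta_{l^{2}})^{l+1}$. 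So even a successfully completed Dickson--Hurwitz computation aimed at $\zeta_{l^{2}}^{-2w}J_{l^{2}}(1,1)$ with unconjugated coefficients would land on the wrong element; the correct order-$l^{2}$ sum to compare against is $J_{l^{2}}(1,(l^{2}-1)/2)$.
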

\begin{proof}
From Proposition 6, for $m=1$, $n=1$ and $s=l^{2}-1$, we have
\begin{equation} \label{5.1}
J_{2l^{2}}(1,1) J_{2l^{2}}(2 ,l^{2}-1) = \chi(-1)J_{2l^{2}}(1,l^{2}-1)J_{2l^{2}}(1,l^{2}).
\end{equation}
By Proposition 1, we obtain $\chi (-1) J_{2l^{2}}(1, l^{2}-1)= J_{2l^{2}}(1,l^{2})$.\\ 
Now equation (\ref{5.1}) becomes 
\begin{equation} \label{5.2}
J_{2l^{2}}(1,1) J_{2l^{2}}(2, l^{2}-1)= J_{2l^{2}}(1,l^{2})J_{2l^{2}}(1,l^{2}).
\end{equation} 
Again by Proposition 5 and Theorem \ref{T1}, we have
\begin{equation} \label{5.3}
J_{2l^{2}}(2, l^{2}-1)=J_{l^{2}}(1, (l^{2}-1)/2)\equiv -1 + \sum_{i=3}^{l}c_{i,n} (\zeta_{l^{2}} -1)^{i} (\textup{mod}\ (1-\zeta_{l^{2}})^{l+1}). 
\end{equation}
For $w=ind_{\gamma}2$, from \textbf{Lemma \ref{L1}}, we have
\begin{equation} \label{5.4}
J_{2l^{2}}(1,1)=\chi^{-1}(4)J_{2l^{2}}(1,l^{2})=\zeta_{l^{2}}^{-w}J_{2l^{2}}(1,l^{2}).
\end{equation} 
Employing (\ref{5.4}) and (\ref{5.3}) in (\ref{5.2}), we get
\begin{equation*}
J_{2l^{2}}(1,l^{2})\equiv \zeta_{l^{2}}^{-w}(-1+\sum_{i=3}^{l} c_{i,n}(\zeta_{l^{2}}-1)^{i})(\textup{mod}\ (1-\zeta_{l^{2}})^{l+1}).
\end{equation*}
\end{proof}

\begin{Lemma} \label{L3}
Let $n$ be an odd integer such that $1\leq n \leq 2l^{2}-1$ and $\emph{gcd}(n,2l^{2})=1$, then
\begin{align*}
J_{2l^{2}}(1,n)&\equiv \zeta_{l^{2}}^{-w(n+1)}(-1+\sum_{i=3}^{l} c_{i,n}(\zeta_{l^{2}}-1)^{i})(-1+\sum_{i=3}^{l} c_{i,n}(\zeta_{l^{2}}^{n}-1)^{i})\\ &\times(-1+\sum_{i=3}^{l} c_{i,n}
(\zeta_{l^{2}}^{(1-l^{2})/2}-1)^{i})\pmod {(1-\zeta_{l^{2}})^{l+1}}.
\end{align*}
\end{Lemma}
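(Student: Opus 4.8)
The plan is to derive, from the multiplicative relation among Jacobi sums, an identity writing $J_{2l^2}(1,n)$ as a product of three Jacobi sums, each of which is already controlled modulo $(1-\zeta_{l^2})^{l+1}$ by Theorem \ref{T1} and Lemma \ref{L2}. First I would invoke Proposition 6 with $m=1$, $n=n$ and $s=l^2-1$, which gives
\[
J_{2l^2}(1,n)\,J_{2l^2}(1+n,l^2-1)=\chi(-1)J_{2l^2}(1,l^2-1)\,J_{2l^2}(n,l^2).
\]
By Proposition 1 in the special form $J_{2l^2}(1,l^2)=\chi(-1)J_{2l^2}(1,l^2-1)$, the factor $\chi(-1)J_{2l^2}(1,l^2-1)$ collapses to $J_{2l^2}(1,l^2)$, so the relation becomes $J_{2l^2}(1,n)\,J_{2l^2}(1+n,l^2-1)=J_{2l^2}(1,l^2)\,J_{2l^2}(n,l^2)$.

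Next I would turn the two awkward factors into objects to which the known congruences apply. Since $n$ is odd and $\gcd(n,2l^2)=1$, the automorphism $\sigma_n:\zeta_{2l^2}\mapsto\zeta_{2l^2}^{n}$ of $\mathbb{Q}(\zeta_{2l^2})=\mathbb{Q}(\zeta_{l^2})$ satisfies $\sigma_n J_{2l^2}(1,l^2)=J_{2l^2}(n,nl^2)=J_{2l^2}(n,l^2)$ by Proposition 4, using $nl^2\equiv l^2\pmod{2l^2}$. To remove the factor $J_{2l^2}(1+n,l^2-1)$ I would multiply through by its complex conjugate and apply Proposition 7, which for $1+n\not\equiv0,-1\pmod{2l^2}$ gives $J_{2l^2}(1+n,l^2-1)\overline{J_{2l^2}(1+n,l^2-1)}=q$. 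This yields
\[
J_{2l^2}(1,n)\,q=J_{2l^2}(1,l^2)\,\sigma_n J_{2l^2}(1,l^2)\,\overline{J_{2l^2}(1+n,l^2-1)}.
\]
I would then rewrite the conjugate using $\overline{J_{2l^2}(1+n,l^2-1)}=J_{2l^2}(-1-n,1-l^2)$; both arguments are even, so Proposition 5 identifies this with $J_{l^2}((-1-n)/2,(1-l^2)/2)$, and since $\chi_{l^2}(-1)=1$ (because $l$ is odd) the symmetry $J_{l^2}(a,b)=J_{l^2}(b,a)$ together with Proposition 4 lets me write it as $\sigma_{(1-l^2)/2}J_{l^2}(1,-1-n)$.

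To finish I would reduce modulo $(1-\zeta_{l^2})^{l+1}$. Here $q\equiv1$, because $l^2\mid q-1$ and $l=(\text{unit})\cdot(1-\zeta_{l^2})^{l(l-1)}$ with $l(l-1)\geq l+1$ for $l\geq3$, so the left-hand side is simply $J_{2l^2}(1,n)$. Substituting Lemma \ref{L2} for $J_{2l^2}(1,l^2)$ gives the factor $\zeta_{l^2}^{-w}A$ with $A=-1+\sum_{i=3}^{l}c_{i,n}(\zeta_{l^2}-1)^{i}$; applying $\sigma_n$ to the same congruence (valid since $\sigma_n$ fixes the prime $(1-\zeta_{l^2})$ when $\gcd(n,l)=1$) gives $\zeta_{l^2}^{-wn}B$ with $B=-1+\sum_{i=3}^{l}c_{i,n}(\zeta_{l^2}^{n}-1)^{i}$; and Theorem \ref{T1} applied to $J_{l^2}(1,-1-n)$, then pushed through $\sigma_{(1-l^2)/2}$, gives $C=-1+\sum_{i=3}^{l}c_{i,n}(\zeta_{l^2}^{(1-l^2)/2}-1)^{i}$. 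Multiplying, the prefactors combine as $\zeta_{l^2}^{-w}\zeta_{l^2}^{-wn}=\zeta_{l^2}^{-w(n+1)}$, producing exactly $\zeta_{l^2}^{-w(n+1)}ABC$.

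The main obstacle is the bookkeeping in the conjugation and automorphism step: one must verify that the three arguments $\zeta_{l^2}$, $\zeta_{l^2}^{n}$ and $\zeta_{l^2}^{(1-l^2)/2}$ genuinely arise from $J_{2l^2}(1,l^2)$, $\sigma_n J_{2l^2}(1,l^2)$ and the conjugate factor respectively, and that each Galois substitution preserves the ideal $(1-\zeta_{l^2})^{l+1}$ (which needs the relevant exponents to be units modulo $l$). Care is also needed with the degenerate index $n\equiv-1\pmod{2l^2}$ and with whether $\gcd(l,n+1)=1$, since Theorem \ref{T1} has a separate branch when the index is divisible by $l$; these cases should be checked to confirm they either fall outside the stated range or are covered by the second case of the theorem.
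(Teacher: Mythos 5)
Your proposal is correct and follows essentially the same route as the paper: Proposition 6 with $s=l^{2}-1$, collapsing $\chi(-1)J_{2l^{2}}(1,l^{2}-1)$ to $J_{2l^{2}}(1,l^{2})$ via Proposition 1, identifying $J_{2l^{2}}(n,l^{2})$ with the Galois conjugate of $J_{2l^{2}}(1,l^{2})$, clearing the factor $J_{2l^{2}}(1+n,l^{2}-1)$ by Proposition 7, rewriting its conjugate through Proposition 5 as $\sigma_{(1-l^{2})/2}J_{l^{2}}(1,-1-n)$, and then substituting Lemma \ref{L2} and Theorem \ref{T1}. Your explicit justification that $q\equiv 1\pmod{(1-\zeta_{l^{2}})^{l+1}}$ and your attention to the Galois-stability of the ideal are points the paper leaves implicit, but the argument is the same.
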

\begin{proof}
By Proposition 6, for $m=1$ and $s=l^{2}-1$, we have
\begin{equation*}
J_{2l^{2}}(1,n) J_{2l^{2}}(1+n ,l^{2}-1) = \chi(-1)J_{2l^{2}}(1,l^{2}-1)J_{2l^{2}}(n,l^{2}).
\end{equation*}
Applying Proposition 1 to get
\begin{equation*}
J_{2l^{2}}(1,n) J_{2l^{2}}(1+n ,l^{2}-1) = J_{2l^{2}}(1,l^{2})J_{2l^{2}}(n,l^{2}).
\end{equation*}
Applying $\tau_{n}:\zeta_{2l^{2}}\rightarrow\zeta_{2l^{2}}^{n}$ (a $\mathbb{Q}$ automorphism of $\mathbb{Q}(\zeta_{2l^2})$) second term in RHS, we get
\begin{equation*}
J_{2l^{2}}(1,n) J_{2l^{2}}(1+n ,l^{2}-1) =J_{2l^{2}}(1,l^{2})\tau_{n}J_{2l^{2}}(1,l^{2}).
\end{equation*}
Now as
$J_{2l^{2}}(1+n,l^{2}-1) \overline{J_{2l^{2}}(1+n,l^{2}-1)}=q,$ we have
\begin{equation*}
J_{2l^{2}}(1,n)J_{2l^{2}}(1+n,l^{2}-1) \overline{J_{2l^{2}}(1+n,l^{2}-1)}= J_{2l^{2}}(1,l^{2})\tau_{n}J_{2l^{2}}(1,l^{2})
\overline{J_{2l^{2}}(1+n,l^{2}-1)}.
\end{equation*}
This implies
\begin{equation}  \label{6.5}
J_{2l^{2}}(1,n)\ q= J_{2l^{2}}(1,l^{2})\tau_{n}J_{2l^{2}}(1,l^{2})\overline{J_{2l^{2}}(1+n,l^{2}-1)}.
\end{equation}
From Proposition 5 and applying $\sigma_{n}:\zeta_{l^{2}}\rightarrow\zeta_{l^{2}}^{n}$ (a $\mathbb{Q}$ automorphism of $\mathbb{Q}(\zeta_{l^2})$), we get,
\begin{align*}
&\overline{J_{2l^{2}}(1+n,l^{2}-1)}=J_{2l^{2}}(-1-n,1-l^{2})=J_{l^{2}}((-1-n)/2,(1-l^{2})/2)\\ &=\sigma_{(1-l^{2})/2}(J_{l^{2}}(1,-1-n).
\end{align*}
Now from Theorem \ref{T1}, we get
\begin{equation} \label{6.9}
\overline{J_{2l^{2}}(1+n,l^{2}-1)}\equiv-1+\sum_{i=3}^{l} c_{i,n}(\zeta_{l^{2}}^{(1-l^{2})/2}-1)^{i}\pmod {(1-\zeta_{l^{2}})^{l+1}}.
\end{equation}
Employing (\ref{6.9}) and \textbf{Lemma \ref{L2}} in (\ref{6.5}), we get
\begin{align*}
J_{2l^{2}}(1,n) &\equiv \zeta_{l^{2}}^{-w(n+1)}(-1+\sum_{i=3}^{l} c_{i,n}(\zeta_{l^{2}}-1)^{i})(-1+\sum_{i=3}^{l} c_{i,n}(\zeta_{l^{2}}^{n}-1)^{i})\\ &\times(-1+\sum_{i=3}^{l} c_{i,n}
(\zeta_{l^{2}}^{(1-l^{2})/2}-1)^{i})\pmod {(1-\zeta_{l^{2}})^{l+1}}.
\end{align*}
\end{proof}

\begin{Lemma} \label{L5}
Let $d\neq l$, $1\leq d \leq 2l-1$ be an odd positive integer, $\gamma$ a generator of $\mathbb{F}_q^*$, then
\begin{align*}
& J_{2l^{2}}(1,dl)\equiv-\zeta_{l^{2}}^{-w(dl+1)}(-1+\sum_{i=3}^{l} c_{i,n}(\zeta_{l^{2}}-1)^{i})(-1+\sum_{i=3}^{l} c_{i,n}
(\zeta_{l^{2}}^{(-1-dl)/2}-1)^{i})(\textup{mod}\ (1-\zeta_{l^{2}})^{l+1}),
\end{align*}
where $w=ind_\gamma2$.
\end{Lemma}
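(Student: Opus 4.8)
The plan is to mirror the proof of Lemma \ref{L3} as closely as possible, isolating the one place where the hypothesis $\gcd(dl,2l^2)=l$ (rather than $=1$) forces a different treatment. First I would invoke Proposition 6 with $m=1$, $n=dl$, $s=l^2-1$ to get $J_{2l^2}(1,dl)J_{2l^2}(1+dl,l^2-1)=\chi(-1)J_{2l^2}(1,l^2-1)J_{2l^2}(dl,l^2)$, and then apply Proposition 1 (which gives $\chi(-1)J_{2l^2}(1,l^2-1)=J_{2l^2}(1,l^2)$) to reduce this to
$$J_{2l^2}(1,dl)\,J_{2l^2}(1+dl,l^2-1)=J_{2l^2}(1,l^2)\,J_{2l^2}(dl,l^2).$$
Since $1+dl$, $l^2-1$ and their sum $dl+l^2=l(d+l)$ are all $\not\equiv 0\pmod{2l^2}$ (the last because $d\neq l$ and $1\le d\le 2l-1$), the factor $J_{2l^2}(1+dl,l^2-1)$ has absolute value $\sqrt q$, so multiplying by its conjugate and using $q\equiv 1\pmod{(1-\zeta_{l^2})^{l+1}}$ (as $l^2\mid q-1$ while $(1-\zeta_{l^2})^{l(l-1)}$ is associate to $l$) yields
$$J_{2l^2}(1,dl)\equiv J_{2l^2}(1,l^2)\,J_{2l^2}(dl,l^2)\,\overline{J_{2l^2}(1+dl,l^2-1)}\pmod{(1-\zeta_{l^2})^{l+1}}.$$

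Next I would evaluate the three factors. Lemma \ref{L2} immediately gives $J_{2l^2}(1,l^2)\equiv\zeta_{l^2}^{-w}(-1+\sum_{i=3}^l c_{i,n}(\zeta_{l^2}-1)^i)$. For the conjugate factor I would follow Lemma \ref{L3} verbatim: write $\overline{J_{2l^2}(1+dl,l^2-1)}=J_{2l^2}(-1-dl,1-l^2)$, reduce it to order $l^2$ by Proposition 5 as $J_{l^2}(\tfrac{-1-dl}{2},\tfrac{1-l^2}{2})$, normalize the first argument to $1$ by Proposition 4, and apply Theorem \ref{T1} in its $\gcd=1$ branch (valid since $-1-dl\equiv -1\pmod l$), obtaining $-1+\sum_{i=3}^l c_{i,n}(\zeta_{l^2}^{(-1-dl)/2}-1)^i$.

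The essential new step, and the expected main obstacle, is the middle factor $J_{2l^2}(dl,l^2)$. In Lemma \ref{L3} the corresponding factor arose by applying the automorphism $\tau_n$ to $J_{2l^2}(1,l^2)$; here $\tau_{dl}$ is \emph{not} available because $dl$ is not invertible modulo $2l^2$. Instead I would use that $d$ \emph{is} invertible ($\gcd(d,2l^2)=1$) together with Proposition 4 to write $J_{2l^2}(dl,l^2)=\sigma_d J_{2l^2}(l,l^2)$, using $dl^2\equiv l^2\pmod{2l^2}$ since $d$ is odd. Because $\chi^{dl}$ has order $2l$, the sum $J_{2l^2}(l,l^2)$ is in fact an order-$2l$ Jacobi sum $J_{2l}(1,l)$; invoking the known congruence for Jacobi sums of order $2l$ (Acharya--Katre, quoted in the Introduction), which holds modulo $(1-\zeta_l)^2$ and hence modulo $(1-\zeta_{l^2})^{l+1}$ (as $(1-\zeta_l)^2$ is associate to $(1-\zeta_{l^2})^{2l}$ and $2l\ge l+1$), gives $J_{2l^2}(l,l^2)\equiv -\zeta_{l^2}^{-wl}$ with no polynomial correction terms. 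Applying $\sigma_d$ then yields $J_{2l^2}(dl,l^2)\equiv -\zeta_{l^2}^{-wdl}$. This single root-of-unity value, rather than a full Theorem \ref{T1} expansion, is exactly why the final answer carries two factors and an extra sign $-1$ in place of the three factors of Lemma \ref{L3}.

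Finally I would multiply the three evaluated factors: the roots of unity combine as $\zeta_{l^2}^{-w}\cdot(-\zeta_{l^2}^{-wdl})=-\zeta_{l^2}^{-w(dl+1)}$, and the surviving polynomial parts give $(-1+\sum c_{i,n}(\zeta_{l^2}-1)^i)(-1+\sum c_{i,n}(\zeta_{l^2}^{(-1-dl)/2}-1)^i)$, which is the asserted congruence. I expect the delicate bookkeeping to be precisely the power of $\zeta_{l^2}$ emerging from the order-$2l$ congruence and from the action of $\sigma_d$ on it; making the exponent land on $-w(dl+1)$ demands care with the normalization conventions (the sign of $\mathrm{ind}_\gamma 2$ and of the Jacobi-sum exponent), and is the only genuinely non-mechanical part of the argument.
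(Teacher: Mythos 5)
Your proposal is correct and follows essentially the same route as the paper: Proposition 6 with $(m,n,s)=(1,dl,l^{2}-1)$, Proposition 1, multiplication by the conjugate, then Proposition 5 plus Theorem \ref{T1} for $\overline{J_{2l^{2}}(1+dl,l^{2}-1)}$, and a reduction of $J_{2l^{2}}(dl,l^{2})$ to the Acharya--Katre congruence of order $2l$. The only (minor) divergence is in that middle factor, where you apply $\sigma_{d}$ to $J_{2l^{2}}(l,l^{2})=J_{2l}(1,l)$ and quote the order-$2l$ congruence directly, while the paper first converts $J_{2l}(d,l)$ to $\chi^{2ld}(2)\,\eta_{d}(J_{2l}(1,1))$ via the order-$2l$ analogue of Lemma \ref{L1}; both land on $-\zeta_{l^{2}}^{-wdl}$ modulo $(1-\zeta_{l})^{2}$ (with the sign convention you rightly flag as the delicate point), and your justification for promoting this to a congruence modulo $(1-\zeta_{l^{2}})^{l+1}$ via $(1-\zeta_{l})\sim(1-\zeta_{l^{2}})^{l}$ is actually cleaner than the paper's step (\ref{5.9}).
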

\begin{proof}
From Proposition 6, for $m=1$, $n=dl$ and $s=l^{2}-1$, we have
\begin{equation*} \label{5.7}
J_{2l^{2}}(1,dl) J_{2l^{2}}(1+dl ,l^{2}-1) = \chi(-1)J_{2l^{2}}(1,l^{2}-1)J_{2l^{2}}(dl,l^{2}).
\end{equation*}
Applying Proposition 1, we get
\begin{equation*} \label{5.8}
J_{2l^{2}}(1,dl) J_{2l^{2}}(1+dl ,l^{2}-1) = J_{2l^{2}}(1,l^{2})J_{2l^{2}}(dl,l^{2}).
\end{equation*} 
Now as
$J_{2l^{2}}(1+dl,l^{2}-1) \overline{J_{2l^{2}}(1+dl,l^{2}-1)}=q,$
we have
\begin{equation*}
J_{2l^{2}}(1,dl) J_{2l^{2}}(1+dl ,l^{2}-1)\overline{J_{2l^{2}}(1+dl,l^{2}-1)} = J_{2l^{2}}(1,l^{2})J_{2l^{2}}(dl,l^{2})\overline{J_{2l^{2}}(1+dl,l^{2}-1)}.
\end{equation*}
This implies
\begin{equation}  \label{6.3}
J_{2l^{2}}(1,dl)\ q = J_{2l^{2}}(1,l^{2})J_{2l^{2}}(dl,l^{2})\overline{J_{2l^{2}}(1+dl,l^{2}-1)}.
\end{equation}
By Proposition 5, we get 
\begin{align*}
\overline{J_{2l^{2}}(1+dl,l^{2}-1)}=J_{2l^{2}}(-1-dl,1-l^{2})=J_{l^{2}}((-1-dl)/2,(1-l^{2})/2)=\sigma_{(-1-dl)/2}(J_{l^{2}}(1,dl-1).
\end{align*}
Now from Theorem \ref{T1}, we get
\begin{equation} \label{6.4}
\overline{J_{2l^{2}}(1+dl,l^{2}-1)}\equiv-1+\sum_{i=3}^{l} c_{i,n}(\zeta_{l^{2}}^{(-1-dl)/2}-1)^{i}(\textup{mod}\ (1-\zeta_{l^{2}})^{l+1})
\end{equation}
As $\chi$ is of order $2l^{2}$, $\chi^{l}$ is of order $2l$, so for $\eta_{d}:\zeta_{l}\rightarrow\zeta_{l}^{d}$ (a $\mathbb{Q}$ automorphism of $\mathbb{Q}(\zeta_l)$ and 
by Lemma $3$ \cite{Katre}, we obtain 
\begin{align*}
J_{2l^{2}}(dl,l^{2})&=J_{2l}(dl,l^{2})=(\chi^{l})^{d}(4)J_{2l}(d,d)=\chi^{ld}(2^{2})\eta_{d}(J_{2l}(1,1))=\chi^{2ld}(2)\eta_{d}(J_{2l}(1,1)).
\end{align*}
Let $w=ind_{\gamma}2$, then we have
\begin{align*}
\chi^{2ld}(2)=\chi^{2ld}(\gamma^{w})=\chi^{2ldw}(\gamma)=\zeta_{2l^{2}}^{2ldw}=\zeta_{l}^{dw}.
\end{align*}
By Proposition 3 \cite{Katre}, we get
\begin{align*}
J_{2l}(1,1)\equiv-\zeta_{l}^{-2w}(\textup{mod}\ (1-\zeta_{l})^{2}).
\end{align*}
So
\begin{align*}
\eta_{d}(J_{2l}(1,1))\equiv-\zeta_{l}^{-2wd}(\textup{mod}\ (1-\zeta_{l}^{d})^{2})\equiv-\zeta_{l}^{-2wd}(\textup{mod}\ (1-\zeta_{l})^{2}).
\end{align*}
Thus 
\begin{align*}
J_{2l^{2}}(dl,l^{2})&\equiv\zeta_{l}^{wd}(-\zeta_{l}^{-2wd})(\textup{mod}\ (1-\zeta_{l})^{2})\equiv-\zeta_{l}^{-wd}(\textup{mod}\ (1-\zeta_{l})^{2})\\ & 
\equiv-\zeta_{l^{2}}^{-wdl}(\textup{mod}\ (1-\zeta_{l^{2}}^{l})^{2})\equiv-\zeta_{l^{2}}^{-wdl}(\textup{mod}\ (1-\zeta_{l^{2}})^{2}).
\end{align*}
This implies 
\begin{equation}\label{5.9}
J_{2l^{2}}(dl,l^{2})(1-\zeta_{l^{2}})^{l-1}\equiv-\zeta_{l^{2}}^{-wdl}(1-\zeta_{l^{2}})^{l-1}(\textup{mod}\ (1-\zeta_{l^{2}})^{l+1}).
\end{equation}
Employing (\ref{5.9}), (\ref{6.4}) and Lemma \ref{L2} in (\ref{6.3}), we get
\begin{align*}
& J_{2l^{2}}(1,dl)\equiv-\zeta_{l^{2}}^{-w(dl+1)}(-1+\sum_{i=3}^{l} c_{i,n}(\zeta_{l^{2}}-1)^{i})(-1+\sum_{i=3}^{l} c_{i,n}
(\zeta_{l^{2}}^{(-1-dl)/2}-1)^{i})(\textup{mod}\ (1-\zeta_{l^{2}})^{l+1}).
\end{align*}
\end{proof}

\section{Main Theorems}
\begin{Theorem}
Let $p$ and $l\geq3$ be primes, $r\in\mathbb{Z}^{+}$, $q=p^{r}\equiv 1 (\emph{mod}\ 2l^{2})$ and $\zeta$ a primitive $2l^{2}$th root of unity. 
Then for $1\leq n\leq 2l^2-3$ the Jacobi sum $J_{2l^{2}}(1,n)$ of order $2l^{2}$ is given by \begin{equation*}
J_{2l^{2}}(1,n)=\sum_{i=0}^{l(l-1)-1} d_{i,n}\zeta^{i},
\end{equation*}
 with
 \begin{equation*}
d_{i,n}=B_{2l^{2}}(i,n)\mp B_{2l^{2}}(l(l-1)+j,n)-B_{2l^{2}}(l^{2}+k,n)\pm B_{2l^{2}}((2\phi(l^{2})+l)+j,n),
 \end{equation*}
 where
 \begin{equation*}
 0\leq j \leq l-1, \ j\equiv i \ (\emph{mod}\ 2l^{2}),\ k\equiv i \ (\emph{mod}\ 2l^{2}), \ 0\leq i,k \leq \phi(2l^{2})-1,
 \end{equation*}
 sign's are '$l$' periodic (i,e; sign's are repeated after $l$ terms).
\end{Theorem}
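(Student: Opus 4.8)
The plan is to start from the Dickson--Hurwitz expansion of Section 2 and then perform two successive reductions of the index range, exploiting the two factorization features of $2l^{2}$: the factor $2$ (which gives $\zeta^{l^{2}}=-1$) and the factor $l^{2}$ (which supplies the cyclotomic relation). Since an even index reduces to an odd one through $J_{2l^{2}}(1,n)=\chi(-1)J_{2l^{2}}(1,2l^{2}-n-1)$ of Proposition 1, I would first treat $n$ odd. For odd $n$ the prefactor in the Dickson--Hurwitz relation is $\chi^{n}(-1)\chi(-1)=(-1)^{k(n+1)}=1$ (as $n+1$ is even, with $k=(q-1)/2l^{2}$), so that
\begin{equation*}
J_{2l^{2}}(1,n)=\sum_{i=0}^{2l^{2}-1}B_{2l^{2}}(i,n)\,\zeta^{i}.
\end{equation*}
This is a sum of $2l^{2}$ terms, whereas $\mathbb{Q}(\zeta)$ has degree $\phi(2l^{2})=l(l-1)$ over $\mathbb{Q}$; the whole content of the theorem is to rewrite this on the power basis $\{1,\zeta,\dots,\zeta^{l(l-1)-1}\}$.

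First I would fold the upper half of the sum onto the lower half using $\zeta^{l^{2}}=-1$ (equivalently $\zeta=-\zeta_{l^{2}}^{(l^{2}+1)/2}$, noted in Section 2), so that $\zeta^{l^{2}+k}=-\zeta^{k}$ for $0\le k\le l^{2}-1$. Substituting $i=l^{2}+k$ in the terms with $i\ge l^{2}$ collapses the expansion to
\begin{equation*}
J_{2l^{2}}(1,n)=\sum_{i=0}^{l^{2}-1}\bigl(B_{2l^{2}}(i,n)-B_{2l^{2}}(l^{2}+i,n)\bigr)\zeta^{i}.
\end{equation*}
This already accounts for the first and third terms $B_{2l^{2}}(i,n)-B_{2l^{2}}(l^{2}+k,n)$ of $d_{i,n}$, where $k\equiv i\pmod{2l^{2}}$ forces $k=i$ in the stated range.

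The second reduction removes the exponents $i\in[\,l(l-1),\,l^{2}-1\,]$. Here I would use the minimal polynomial of $\zeta$: since $l^{2}$ is odd, $\Phi_{2l^{2}}(x)=\Phi_{l^{2}}(-x)=\sum_{t=0}^{l-1}(-1)^{t}x^{tl}$, and evaluating at $\zeta$ and multiplying by $\zeta^{j}$ for each $0\le j\le l-1$ gives
\begin{equation*}
\zeta^{(l-1)l+j}=-\sum_{t=0}^{l-2}(-1)^{t}\zeta^{tl+j}.
\end{equation*}
Writing each exponent in $[\,l(l-1),\,l^{2}-1\,]$ as $(l-1)l+j$ and substituting, the coefficient attached to a basis vector $\zeta^{i}$ with $i=tl+j$ ($0\le t\le l-2$, $j\equiv i\pmod l$) picks up the extra contribution $(-1)^{t+1}\bigl(B_{2l^{2}}(l(l-1)+j,n)-B_{2l^{2}}(l^{2}+l(l-1)+j,n)\bigr)=(-1)^{t+1}B_{2l^{2}}(l(l-1)+j,n)+(-1)^{t}B_{2l^{2}}(l^{2}+l(l-1)+j,n)$. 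Recognizing $l^{2}+l(l-1)+j=2\phi(l^{2})+l+j$ then produces exactly the two remaining terms $\mp B_{2l^{2}}(l(l-1)+j,n)\pm B_{2l^{2}}((2\phi(l^{2})+l)+j,n)$, whose common sign $(-1)^{t+1}$ is constant on each block of $l$ consecutive indices and flips between blocks, which is the asserted $l$-periodicity.

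The main obstacle, and the genuinely new feature compared with the order-$l^{2}$ result of Proposition 8, is precisely this sign bookkeeping: in the $l^{2}$ case $\Phi_{l^{2}}$ has all coefficients equal to $1$, so every reduced contribution carries the same sign, whereas for $2l^{2}$ the alternating coefficients of $\Phi_{2l^{2}}$ force the block-dependent signs. I would therefore need to verify carefully the index identifications ($l^{2}+l(l-1)+j=2\phi(l^{2})+l+j$ and $k\equiv i$) together with the parity of $t=\lfloor i/l\rfloor$ to confirm that the four-term formula for $d_{i,n}$ holds with the stated periodic choice of $\mp$ and $\pm$. Finally, I would dispatch the even-$n$ case by transporting the odd-index formula through the relation $J_{2l^{2}}(1,n)=\chi(-1)J_{2l^{2}}(1,2l^{2}-n-1)$ of Proposition 1.
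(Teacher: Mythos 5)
Your proposal is correct and follows essentially the same route as the paper: expand $J_{2l^{2}}(1,n)=\sum_{i=0}^{2l^{2}-1}B_{2l^{2}}(i,n)\zeta^{i}$ and reduce to the power basis $\{1,\zeta,\dots,\zeta^{l(l-1)-1}\}$ via the relation $\sum_{t=0}^{l-1}(-1)^{t}\zeta^{tl}=0$, whose alternating coefficients are exactly what produce the $l$-periodic sign pattern in $d_{i,n}$. Your two-stage reduction (first folding with $\zeta^{l^{2}}=-1$, then clearing the exponents in $[l(l-1),\,l^{2}-1]$) is just a cleaner organization of the paper's single-pass substitution, and your explicit check that the prefactor $\chi^{n+1}(-1)$ equals $1$ for odd $n$ is a detail the paper glosses over.
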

\begin{proof}
Cyclotomic polynomial of order $n=2l^{2}$ is
\begin{equation*}
\phi_{n}(x)=\sum_{i=0}^{l-1}(-x)^{il}=1-x^{l}+x^{2l}-x^{3l}+.......+x^{l(l-1)}
\end{equation*}
So, we have
\begin{equation*}
1-\zeta^{l}+\zeta^{2l}-\zeta^{3l}+.......+\zeta^{l(l-1)}=0
\end{equation*}
\begin{equation*}
\Rightarrow \zeta^{l(l-1)}=-1+\zeta^{l}-\zeta^{2l}+.......+\zeta^{l(l-2)}.
\end{equation*}
Every $\zeta^{i}, \  l(l-1)\leq i \leq 2l^{2}-1$ can be written as a linear combination of $\zeta^{i},0\leq i \leq l(l-1)-1$.
\begin{equation*}
J_{2l^{2}}(1,n)=\sum_{i=0}^{2l^{2}-1}B_{2l^{2}}(i,n)\zeta^{i}
\end{equation*}
\begin{align*}
&=B_{2l^{2}}(0,n)+B_{2l^{2}}(1,n)\zeta+B_{2l^{2}}(2,n)\zeta^{2}+......+B_{2l^{2}}(l(l-1)-1,n)\zeta^{l(l-1)-1} + B_{2l^{2}} (l(l-1),n)\\ & \ \ \ \  (-1+\zeta^{l}-\zeta^{2l}+.....+\zeta^{l(l-2)})+B_{2l^{2}}(l(l-1)+1,n)(-\zeta+\zeta^{l+1}-\zeta^{2l+1}+....+\zeta^{l(l-2)+1})+....\\& \ \ \  +B_{2l^{2}}(l(l-1)+l,n)(-\zeta^{l}+\zeta^{2l}-\zeta^{3l}+.....+\zeta^{l(l-2)+l})+.....+B_{2l^{2}}(2l(l-1)+l-1,n)\\& \ \ \ \ (-\zeta^{l(l-1)}-1)+B_{2l^{2}}(2l(l-1)+l,n)(-\zeta^{l(l-1)})+.....+B_{2l^{2}}(2l^{2}-1,n)(-\zeta^{l^{2}-1})
\end{align*}
\begin{align*}
&=B_{2l^{2}}(0,n)+B_{2l^{2}}(1,n)\zeta+B_{2l^{2}}(2,n)\zeta^{2}+......+B_{2l^{2}}(l(l-1)-1,n)\zeta^{l(l-1)-1}+B_{2l^{2}}(l(l-1),n)\\ & \ \ \ \ (-1+\zeta^{l}-\zeta^{2l}+.....+\zeta^{l(l-2)})+B_{2l^{2}}(l(l-1)+1,n)(-\zeta+\zeta^{l+1}-\zeta^{2l+1}+....+\zeta^{l(l-2)+1})+....\\ & \ \ \ +B_{2l^{2}}(l(l-1)+l,n)(-l)+.....+B_{2l^{2}}(2l(l-1)+l-1,n)(-\zeta^{l(l-1)}-1)+B_{2l^{2}}(2l(l-1)+l,n)\\ & \ \ \ \ (-\zeta^{l(l-1)})+.....+B_{2l^{2}}(2l^{2}-1,n)(-\zeta^{l^{2}-1})
\end{align*}

\begin{align*}
&=B_{2l^{2}}(0,n)-B_{2l^{2}}(l(l-1),n)-B_{2l^{2}}(l(l-1)+l,n)+B_{2l^{2}}(2l(l-1)+l,n)+\zeta(B_{2l^{2}}(1,n)\\ & \ \ \ -B_{2l^{2}}(l(l-1)+1,n)-B_{2l^{2}}(l(l-1)+l+1,n)+B_{2l^{2}}(2l(l-1)+l+1,n))+\zeta^{2}(B_{2l^{2}}(2,n)\\ & \ \ \ -B_{2l^{2}}(l(l-1)+2,n)-B_{2l^{2}}(l(l-1)+l+2,n)+B_{2l^{2}}(2l(l-1)+l+2,n))+......+\zeta^{l}(B_{2l^{2}}(l,n)\\ & \ \ \ +B_{2l^{2}}(l(l-1),n)-B_{2l^{2}}(l(l-1)+2l,n)-B_{2l^{2}}(2l(l-1)+l,n))+......+\zeta^{l(l-1)-1}(B_{2l^{2}}(l(l-1)-1,n)\\ & \ \ \ +B_{2l^{2}}(l(l-1)+l-1,n)-B_{2l^{2}}(2l^{2}-(l+1),n)+B_{2l^{2}}(2l(l-1)+2l-1,n))
\end{align*}

$=\sum_{i=0}^{l(l-1)-1}d_{i,n}\zeta^{i}$.
\\ Here
\begin{equation*}
d_{i,n}=B_{2l^{2}}(i,n)\mp B_{2l^{2}}(l(l-1)+j,n)-B_{2l^{2}}(l^{2}+k,n)\pm B_{2l^{2}}((2\phi(l^{2})+l)+j,n),
\end{equation*}
where
\begin{equation*}
0\leq j \leq l-1, \ j\equiv i \ (\textup{mod}\ 2l^{2}), \ k\equiv i \ (\textup{mod}\  2l^{2}), \  0\leq i,k \leq \phi(2l^{2})-1,
\end{equation*}
and sign's are '$l$' periodic.
\end{proof}
\begin{Theorem} 
Let $l\geq3$ be a prime and $q=p^{r}\equiv 1 \ (\emph{mod}\ 2l^{2})$. If $n$, $1\leq n \leq 2l^{2}-3$ is an odd integer,    
 then a congruence for $J_{2l^{2}}(1,n)$ over $\mathbb{F}_{q}$ is given by
\begin{equation*}
J_{2l^{2}}(1,n)\equiv \begin{cases}
\zeta_{l^{2}}^{-w}(-1+\sum_{i=3}^{l} c_{i,n}(\zeta_{l^{2}}-1)^{i})(\textup{mod}\ (1-\zeta_{l^{2}})^{l+1})),\ \mbox{if $n=l^{2}$}, \\ \\ 
-\zeta_{l^{2}}^{-w(dl+1)}(-1+\sum_{i=3}^{l} c_{i,n}(\zeta_{l^{2}}-1)^{i})(-1+\sum_{i=3}^{l} c_{i,n}(\zeta_{l^{2}}^{(-1-dl)/2}-1)^{i})\\
\pmod {(1-\zeta_{l^{2}})^{l+1})}  \ \mbox{if $d\neq l$ odd integer and $n=dl$},\\ \\
  \zeta_{l^{2}}^{-w(n+1)}(-1+\sum_{i=3}^{l} c_{i,n}(\zeta_{l^{2}}-1)^{i})(-1+\sum_{i=3}^{l} c_{i,n}(\zeta_{l^{2}}^{n}-1)^{i})\\ \times(-1+\sum_{i=3}^{l} c_{i,n}
  (\zeta_{l^{2}}^{(1-l^{2})/2}-1)^{i})\pmod {(1-\zeta_{l^{2}})^{l+1})},\, \mbox{if \, $gcd(n,2l^{2})=1$}.  
\end{cases}
\end{equation*}
Where $c_{i,n}$ are as described in the Theorem \ref{T1}.
\end{Theorem}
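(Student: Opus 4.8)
The plan is to recognize that this theorem is the collection of the three preceding lemmas, organized according to an exhaustive case-split on the odd integer $n$. The only genuine content is verifying that the three subcases listed in the introduction---namely $n=l^{2}$, $n=dl$ with $d$ odd and $d\neq l$, and $\gcd(n,2l^{2})=1$---partition the odd integers in the range $1\leq n\leq 2l^{2}-3$; once this is confirmed, each case is dispatched verbatim by the corresponding lemma, so no new analytic work is required.

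First I would establish exhaustiveness. Since $n$ is odd we have $\gcd(n,2)=1$, hence $\gcd(n,2l^{2})=\gcd(n,l^{2})$, and because $l$ is prime this common divisor lies in $\{1,l,l^{2}\}$. If $\gcd(n,l^{2})=l^{2}$ then $l^{2}\mid n$, and the constraint $1\leq n\leq 2l^{2}-3$ forces $n=l^{2}$ (Subcase i). If $\gcd(n,l^{2})=l$ then $n=dl$ with $l\nmid d$; writing $n=dl$ under $1\leq n\leq 2l^{2}-3$ gives $1\leq d\leq 2l-1$, and in this range $l\mid d$ holds only for $d=l$, so the condition $l\nmid d$ is equivalent to $d\neq l$ (Subcase ii). The remaining possibility $\gcd(n,l^{2})=1$ is exactly $\gcd(n,2l^{2})=1$ (Subcase iii). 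In the middle case, $n=dl$ odd together with $l$ odd forces $d$ to be odd, which matches the standing hypothesis of Lemma \ref{L5}; likewise $n=l^{2}$ and each $n$ with $\gcd(n,2l^{2})=1$ automatically meet the range and parity requirements of Lemmas \ref{L2} and \ref{L3} respectively, since $1\leq n\leq 2l^{2}-3$ sits inside the intervals assumed there.

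Having verified the partition, the theorem follows immediately by invoking the corresponding lemma in each branch: the $n=l^{2}$ congruence is the conclusion of Lemma \ref{L2}, the $n=dl$ congruence is the conclusion of Lemma \ref{L5}, and the $\gcd(n,2l^{2})=1$ congruence is the conclusion of Lemma \ref{L3}. I do not anticipate a substantive obstacle, since the entire computational burden---the multiplicative identities of Propositions 1, 5 and 6, the norm relation of Proposition 7 used to clear the factor $q$, the automorphisms $\tau_{n},\sigma_{k},\eta_{d}$, and the base congruence of Theorem \ref{T1}---has already been discharged inside those lemmas. The one point demanding care is purely clerical: confirming that the hypotheses of each subcase align precisely with the hypotheses of the lemma applied to it, which is precisely what the exhaustiveness computation above guarantees.
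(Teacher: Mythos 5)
Your proposal is correct and follows essentially the same route as the paper, whose proof of this theorem is literally the one-line statement that it is immediate from Lemmas \ref{L2}, \ref{L5}, and \ref{L3}. Your added verification that the three subcases exhaust the odd $n$ in $1\leq n\leq 2l^{2}-3$ (via $\gcd(n,2l^{2})=\gcd(n,l^{2})\in\{1,l,l^{2}\}$) is a sound and worthwhile supplement, but it does not change the argument.
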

\begin{proof}
The proof of the theorem is immediate from the above-mentioned Lemma's.
\end{proof}
\noindent\textbf{Remark 2:} If $n$ is even, $2\leq n\leq 2l^{2}-2$. The congruences for Jacobi sums $J_{2l^{2}}(1,n)$ can be calculated using the relation 
$J_{2l^{2}}(1,n)=\chi(-1)J_{2l^{2}}(1,2l^{2}-n-1)$. Also if $d$ in the theorem is even then $J_{2l^2}(1,dl)=\chi(-1)J_{2l^2}(1, 2l^2-dl-1)$ and $2l^2-dl-1$
is odd. Thus the congruences for $J_{2l^2}(1,n)$ gets completely determined and hence that of all Jacobi sums of order $2l^2$. Further as $1-\zeta_{2l^2}$
is a divisor of $1-\zeta_{l^2}$, the congruences remains same modulo $(1-\zeta_{2l^2})^{l+1}$.  
\\ \\  \textbf{Future Scope:} The study of cyclotomic numbers of order $2l^{2}$ and prime ideal decomposition of Jacobi sums of order $2l^{2}$ are among the future scopes.\\ \\ 
\noindent\textbf{Acknowledgment:} The authors would like to thank   Central University of Jharkhand, Ranchi, Jharkhand, India for the support during preparation of this research article.

Department of Mathematics, Central University of Jharkhand, Ranchi, Jharkhand- 835205, India. \\
 Email Addresses: ahmed.helal@cuj.ac.in and jagmohan.t@gmail.com

\end{document}